\newcommand{\Z}{{\mathbb Z}} 
\newcommand{\Q}{{\mathbb Q}}
\newcommand{\cO}{{\mathcal O}}
\newcommand{\N}{\mathbb{N}}
\newcommand{\SL}{\operatorname{SL}}
\newcommand{\p}{{\mathfrak p}}
\newtheorem{thm}{Theorem}
\newtheorem{lemma}[thm]{Lemma}
\newtheorem{prop}[thm]{Proposition}
\theoremstyle{remark}
\newtheorem{rem}{Remark}
\numberwithin{equation}{section}
\numberwithin{thm}{section}
\renewcommand{\Im}{\operatorname{Im}}
\begin{document}  

\author{P. Guerzhoy}
\address{ Department of Mathematics, University of Hawaii, 2565 McCarthy Mall, Honolulu, HI,  96822-2273  }
\email{pavel@math.hawaii.edu}
\author{Ben Kane}
\address{Department of Mathematics, University of Hong Kong, Pokfulam, Hong Kong}
\email{bkane@hku.hk}
\title[A special case of Siegel's mass formula]{A very special case of Siegel's mass formula and Hecke operators}
\thanks{The research of the second author is supported by grant project numbers 17316416, 17301317, and 17303618 of the Research Grants Council of Hong Kong SAR. Part of the research was also conducted while the second author was supported by grant project numbers 273000314 and 17302515 of the Research Grants Council of Hong Kong SAR}
\subjclass[2010]{11E16,11E41,11F12,11F25,11F27,11F30}
\date{\today}
\keywords{Siegel mass formula, binary quadratic forms, theta functions}
\begin{abstract}
We make use of Hecke operators and arithmetic of imaginary quadratic fields to derive 
an explicit version of a special case of Siegel's mass formula.
\end{abstract}
\maketitle
 
\section{Introduction} \label{intro}

In this paper, we investigate representations of integers by positive-definite integral quadratic forms. Ranging back to Lagrange's proof that every integer may be written as the sum of four squares, this question has a long and storied history. Letting $r(Q,n)$ denote the number of representations of $n\in\N_0:=\N\cup\{0\}$ by a positive-definite integral quadratic form $Q$ of rank $\ell$, it is now well-known (see for example \cite[Proposition 2.1]{Shimura}) that the generating function ($q:=e^{2\pi i \tau}$, with $\Im (\tau)>0$ throughout)
\[
\theta_Q(\tau):=\sum_{n\geq 0} r(Q,n) q^n
\]
is a modular form of weight $\ell/2$ on a congruence subgroup with some (quadratic) character. Using this fact, one may naturally decompose $\theta_Q$ into an Eisenstein series $E_Q$ and a cusp form $f_Q$; namely, we have 
\begin{equation}\label{eqn:thetaQdecompose}
\theta_Q=E_Q+f_Q.
\end{equation}
Using \eqref{eqn:thetaQdecompose} together with bounds on the asymptotic growth of the Fourier coefficients of $E_Q$ and $f_Q$, Tartakowsky \cite{Tartakowsky1,Tartakowsky2} proved that when $\ell\geq 5$ (extended under some mild conditions to $\ell=4$ by Kloosterman \cite{Kloosterman}, who exploited the modularity in his usage of the Hardy-Littlewood Circle Method, and to $\ell=3$ by Duke--Schulze-Pillot \cite{DukeSchulzePillot} under more serious restrictions) every sufficiently large integer which is locally represented (i.e., modulo any power of any prime, and hence, by the Chinese remainder theorem, modulo any integer) is represented by $\theta_Q$; moreover, the $n$th coefficient of $E_Q$ is positive if and only if $n$ is locally represented. Given Tartakowsky's result, one is naturally led to ask the following question: given $Q$, can one construct $E_Q$ explicitly? This question was essentially resolved by Siegel \cite{Siegel,Siegel1,Siegel2} and subsequent work of Weil \cite{Weil}.

Let $w_Q$ denote the number of isometries from $Q$ to itself (the so-called \begin{it}automorphs\end{it} of $Q$) and let $\mathcal{G}=\mathcal{G}(Q)$ denote a set of representatives of the isometry classes of quadratic forms in a \begin{it}genus\end{it} $g=\text{gen}(Q)$ (i.e., a set of quadratic forms  which  are isometric over the adeles, but not necessarily over the integers).
Let
\[
E_g:= \frac{1}{\sum_{Q\in \mathcal{G}} w_Q^{-1}} \sum_{Q\in \mathcal{G}}\frac{\theta_{Q}}{w_Q}.
\]
\begin{it}Siegel's mass formula\end{it} (also known as the \begin{it}Siegel--Weil average\end{it}) states that 
\begin{equation}\label{eqn:SiegelWeil}
E_g=E_{Q_0}=\theta_{\operatorname{gen}(Q_0)}:= \frac{1}{\sum_{Q\in \mathcal{G}(Q_0)} w_Q^{-1}} \sum_{Q\in \mathcal{G}(Q_0)}\frac{\theta_{Q}}{w_Q}.
\end{equation}
This formula implies, in particular, that $E_g$ belongs to the Eisenstein series subspace. Moreover, while $\theta_Q$ (therefore $E_Q$) is clearly class-invariant, for $Q_1$ and $Q_2$ in the same genus but not in the same class, it may well be the case that $\theta_{Q_1} \neq \theta_{Q_2}$. The Siegel--Weil average \eqref{eqn:SiegelWeil} implies that the projection to the Eisenstein series 
is genus-invariant, not merely class-invariant: $E_{Q_1}=E_{Q_2}=E_g$ with $g=\text{gen}(Q_1)=\text{gen}(Q_2)$.

\begin{rem}
\noindent
\begin{enumerate}[leftmargin=*]
\item 
Siegel actually stated \eqref{eqn:SiegelWeil} in much more generality in \cite{Siegel}, investigating representations of quadratic forms by quadratic forms. In other words, for another quadratic form $\mathfrak{Q}$ of rank $m\leq \ell$ with Gram matrix $A_{\mathfrak{Q}}$, Siegel was interested in determining the set of $\ell \times m$ matrices $M$ for which 
\[
A_{\mathfrak{Q}} = M^T A_{Q} M.
\]
A corresponding generating function for the number of such representations for all $\mathfrak{Q}$ of rank $m$ is a higher-dimensional modular forms that is now known as a \begin{it}Siegel modular form\end{it} (of weight $\ell/2$ and degree $m$) and the weighted average in the sense of \eqref{eqn:SiegelWeil} is indeed the Siegel--Eisenstein series component of this Siegel modular form. In the case $m=1$, one notes that the unary quadratic forms $\mathfrak{Q}(x)=nx^2$ are in one-to-one correspondence with the integers $n\in\N$, and the generating function precisely becomes $\theta_Q$. 
\item 
After determining representatives for the genus of $Q_0$ (see \cite{SchulzePillotAlgorithm} for an algorithm yielding these for $\ell=3,4$), one can then compute the Fourier coefficients of the theta functions on the right-hand side of \eqref{eqn:SiegelWeil} up to the bound from the valence formula and use linear algebra to determine the coefficients of $E_{Q_0}$ explicitly in practice (at least algorithmically speaking). 
\item
The coefficients of cusp forms behave in a more ``mysterious'' and ``erratic'' way than those of Eisenstein series (e.g., 
Knopp, Kohnen and Pribitkin \cite{KnoppKohnenPribitkin} (integral weight case) and Kohnen, Lau, and Wu \cite{KohnenLauWu} (half-integral weight) have shown that when they have real coefficients 
they alternate in sign infinitely often, 
it is still unknown whether any of the coefficients of the unique weight $12$ normalized newform $\Delta$ ever vanish, and the eigenvalues of Hecke eigenforms are equidistributed with respect to the Sato--Tate distribution). It is hence worth noting that although the cuspidal part $f_Q$ appearing in the decomposition \eqref{eqn:thetaQdecompose} for $\theta_Q$ is in some sense ``mysterious'', the Siegel--Weil formula \eqref{eqn:SiegelWeil} states that a certain natural linear combination of the cusp forms occurring from multiple theta functions indeed vanishes. Viewing these as ``arbitrary'' cusp forms, the linear dependence of these functions is itself extraordinary, yielding to the conclusion that there is a deep connection between these theta functions that relies on the underlying structure of the quadratic forms.
\end{enumerate}
\end{rem}
Note that since $E_{Q_0}$ is independent of the choice of $Q\in \operatorname{gen}(Q_0)$, the Siegel--Weil average  \eqref{eqn:SiegelWeil} implies that if the $n$th coefficient of $E_{Q_0}$ is positive then $r(Q,n)$ is positive for at least one $Q\in\operatorname{gen}(Q_0)$; Tartakowsky's result may be interpreted as the statement that for $n$ sufficiently large the representations are equidistributed across all of the isometry classes. One often refers to $\sum_{Q\in \mathcal{G}(Q_0)}w_Q^{-1}$ as the \begin{it}mass\end{it} of the genus of $Q$. The coefficients of the weighted average in \eqref{eqn:SiegelWeil} was also shown by Siegel to be equal to a product of local densities at each prime $p$, which were in turn shown to be equal to certain $p$-adic integrals and may also written as limits of $r(Q,np^r)$ divided by a power of $p^r$. These local densities were computed by a number of authors in many special cases, culminating in explicit computations of Yang \cite{Yang}. 

For $\ell$ even, Walling \cite{Walling} investigated explicit formulas for the Siegel--Eisenstein series of the degree $m$ theta series coming from representing quadratic forms of rank $m$. Walling's formulas are given in terms of an explicit linear combination of Siegel--Eisenstein series. For the $m=1$ case considered in this paper, Walling's formulas give $E_Q$ as an explicit linear combination of the Eisenstein series defined at each cusp and the constants in her linear combinations are products over the primes dividing $N$ of certain Gauss sums. 

Due to divergence of the naive definitions of the Eisenstein series defined at each cusp, Walling's formulas (for $m=1$) only hold for $\ell\geq 6$; in general, one requires the co-dimension $\ell-m$ to be sufficiently large for absolute convergence of the Eisenstein series, and Walling requires $m<\frac{\ell}{2}-1$ in particular. 

In rank $\ell=3$, Jones \cite[Theorem 86]{Jones} found explicit formulas for the Fourier coefficients of $E_Q$ as sums of class numbers of imaginary quadratic orders; these sums are often called \begin{it}Hurwitz class numbers\end{it}. 

For the rest of this paper, we restrict ourselves to the very special case $\ell=2$ and also restrict ourselves to the case when the discriminant $\Delta:=b^2-4ac<0$ of $Q(x,y):=ax^2+bxy+cy^2$ is fundamental (here $a,b,c\in\Z$ and we denote the corresponding quadratic form by $[a,b,c]$). 

We now fix a fundamental discriminant $\Delta<0$ once and for all and our goal is to use the connection between quadratic forms and the arithmetic of the ideal class group to apply algebraic techniques that yield explicit formulas for $E_Q$ in this case. Recall that Gauss \cite{Gauss} constructed a composition law for the set $H=H(\Delta)$ of $\SL_2(\Z)$-equivalence classes of integral binary quadratic forms of discriminant $\Delta$ which makes $H$ into a group. This group law can be transparently described via a correspondence with the class group of ideals in the ring of integers of the corresponding imaginary quadratic field. The connection between quadratic forms and ideals moreover implies that the automorphs of binary quadratic forms are in one-to-one correspondence with units in the ring of integers $\cO_K$ of the imaginary quadratic field $K=\Q(\sqrt{\Delta})$. Hence $w_Q$ is entirely determined by $\Delta$ and given by 
\begin{equation}\label{eqn:wdef}
w=w_{\Delta}:= \begin{cases} 4 & \text{if $\Delta=-4$} \\
6 &  \text{if $\Delta = -3$} \\
2 & \text{if $\Delta<-4$.}
\end{cases}
\end{equation}
Since $w$ is independent of $Q$, \eqref{eqn:SiegelWeil} simplifies in the case of binary quadratic forms to 
\begin{equation}\label{eqn:Ebinary}
E_{Q_0} = \frac{1}{|\mathcal{G}(Q_0)|}\sum_{Q\in \mathcal{G}(Q_0)}\theta_Q.
\end{equation}
Instead of summing over a given genus as later investigated by Siegel, Gauss considered the sum over the entire class group (see \cite[p.42]{Zagier}). Letting $Q_h$ denote a representative of the class $h \in H$, Gauss showed that for $n>0$
\begin{equation} \label{gauss}
\sum_{h \in H} r(Q_h,n) = w \sum_{t|n} \left(\frac{\Delta}{t}\right).
\end{equation}
Taking the generating function of both sides of \eqref{gauss}, for $n>0$ one recognizes the right-hand side as the $n$th Fourier coefficient of an Eisenstein series. Since the left-hand side is a sum of $\vert H \vert$ modular forms in $M_1\left(\Gamma_0(-\Delta), \left(\frac{\Delta}{\cdot}\right) \right)$, it must be a modular form in the same space; each $Q_h$ represents $0$ precisely once and there are  $\vert H \vert$ such $h$, yielding the constant term of the generating function
\begin{equation} \label{dirichlet}
\frac{1}{w} \sum_{h \in H} \Theta_h = \frac{ \vert H \vert }{w} + \sum_{n \geq 1} \left( \sum_{t|n} \left( \frac{\Delta}{t} \right) \right) q^n,
\end{equation}
where $\Theta_h:=\theta_{Q_h}$. From \eqref{dirichlet} (and the determination of the constant term of a modular form from its other Fourier coefficients), we recover Dirichlet's class number formula by equating the constant terms 
\[
 \vert H \vert =\frac{w \sqrt{-\Delta}}{2\pi} L\left(1, \left( \frac{\Delta}{\cdot } \right) \right)
\]
from the Fourier expansion of the weight one Eisenstein series (cf. e.g. \cite[Theorem 15.1.1]{Lang}). Here and throughout $L(s,\Delta)$ denotes the special value at $s$ of the analytic continuation of the Dirichlet $L$-function associated with the imaginary quadratic field $\Q(\sqrt{\Delta})$ for a fundamental discriminant $\Delta<0$.

Siegel's formula \eqref{eqn:Ebinary}, however, claims that not only the sum of theta functions taken over the whole class group is an Eisenstein series, but also a subsum taken over classes in a single genus is an Eisenstein series as well. Specifically, let
\[
G:=H/H^2
\]
be the group of genera of binary quadratic forms. Since the genera may be written as cosets of a finite group, each $g\in G$ has the same size $|\mathcal{G}(Q_h)|=|g|=|H^2|$, and we may write \eqref{eqn:Ebinary} in the form
\rm
\[
E_g = \frac{1}{|H^2|}\sum_{h \in g} \Theta_h.
\]
Since $|H^2|$ is an invariant for the discriminant, we relegate ourselves to computing $|H^2|E_g$ in this paper.

We first instead consider twisted versions of \eqref{gauss}. Specifically, emulating Gauss's \eqref{gauss} and the associated formula \eqref{dirichlet}, for a character $\chi$ of $H$ one may consider the associated sums 
\begin{align*}
r_{\chi}(n)&:=\sum_{h \in H}\chi(h) r(Q_h,n),\\
E_{\chi}&:=\frac{1}{w}\sum_{h\in H} \chi(h)\Theta_{h}.
\end{align*}
We restrict ourselves to genus characters $\chi$ on $G$ and note that in this case
\begin{align}
\nonumber r_{\chi}(n)&=\sum_{h \in H}\chi(h) r(Q_h,n)=\sum_{g\in G} \chi(g)\sum_{h\in g} r(Q_h,n),\\
\label{eqn:Echi} E_{\chi}&=\frac{|H^2|}{w}\sum_{g\in G} \chi(g)E_{g}.
\end{align}
The genus characters are in one-to-one correspondence with pairs of discriminants $d>0$ and $D<0$ for which $dD=\Delta$ (see, for example, \cite[Section I.2]{GKZ}), and we denote the corresponding genus character by $\chi_{d,D}$. On the other hand, every such pair of discriminants defines a weight one Eisenstein series  $E_{d,D}$ on
$\Gamma_0(|\Delta|)$ with character $\left(\frac{\Delta}{n}\right)$ 
defined by (cf. \cite[Sections 4.5--4.8]{DiamondShurman})  
\begin{equation}\label{eqn:Edef}
E_{d,D}(\tau):=\frac{1}{2}\delta_{d=1}L(0,D)+ \sum_{n\geq 1} \sum_{t\mid n} \left(\frac{d}{n/t}\right)\left(\frac{D}{t}\right) q^n
\end{equation}
whose Fourier coefficients (except the constant term) are manifestly simple divisor sums. We will show in Section \ref{blackboard} that the two Eisenstein series constructed from a pair of discriminants $(d,D)$ coincide.
\rm
\begin{thm}\label{thm:genustwist}
For discriminants $d>0$ and $D<0$ such that $dD=\Delta$ is fundamental, we have 
\[
E_{\chi_{d,D}} = E_{d,D}.
\]
\end{thm}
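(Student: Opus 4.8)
The plan is to prove the identity coefficient-by-coefficient in the $q$-expansion. Both $E_{\chi_{d,D}}$ and $E_{d,D}$ are weight-one forms in $M_1(\Gamma_0(|\Delta|),(\frac{\Delta}{\cdot}))$, so it suffices to show that their $n$-th Fourier coefficients agree for every $n\ge 0$. For $n\ge1$ the coefficient of $E_{\chi_{d,D}}$ is $\frac1w r_{\chi_{d,D}}(n)=\frac1w\sum_{h\in H}\chi_{d,D}(h)\,r(Q_h,n)$, while that of $E_{d,D}$ is the divisor sum $\sum_{t\mid n}(\frac{d}{n/t})(\frac{D}{t})$. The two ingredients I would assemble are (i) the classical dictionary between binary forms of discriminant $\Delta$ and ideals of $\cO_K$, $K=\Q(\sqrt\Delta)$, and (ii) the factorization of the genus-character Hecke $L$-function into a product of two Dirichlet $L$-functions.

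For (i), recall that the Gauss correspondence gives a group isomorphism from $H$ to the ideal class group $\mathrm{Cl}(\cO_K)$, say $h\mapsto C_h$, under which a genus character $\chi_{d,D}$ of $H$ is carried to the genus character of $\mathrm{Cl}(\cO_K)$ attached to the factorization $\Delta=dD$. Since $\Delta$ is fundamental, for $n\ge1$ one has $r(Q_h,n)=w\cdot\#\{\mathfrak a\subseteq\cO_K:\ N\mathfrak a=n,\ [\mathfrak a]=C_h\}$, the factor $w$ accounting for the automorphs via \eqref{eqn:wdef}. Summing against $\chi_{d,D}$ and using that $h\mapsto C_h$ is a bijection collapses the double sum into a single sum over all integral ideals of norm $n$:
\[
\frac1w\,r_{\chi_{d,D}}(n)=\sum_{\substack{\mathfrak a\subseteq\cO_K\\ N\mathfrak a=n}}\chi_{d,D}([\mathfrak a]).
\]

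For (ii), I would pass to Dirichlet series and compare Euler factors. The generating series of the right-hand side is the Hecke $L$-function $L(s,\chi_{d,D})=\sum_{\mathfrak a}\chi_{d,D}([\mathfrak a])\,N\mathfrak a^{-s}$, whose $n$-th Dirichlet coefficient is exactly the inner sum above. Using the explicit description of the genus character on prime ideals $\p$ above a rational prime $p$ — namely $\chi_{d,D}(\p)=(\frac{d}{p})$ whenever $p\nmid d$ and $\chi_{d,D}(\p)=(\frac{D}{p})$ whenever $p\nmid D$, the two being consistent since $(\frac{d}{p})(\frac{D}{p})=(\frac{\Delta}{p})=1$ at primes split in $K$ — a prime-by-prime check at split, inert, and ramified $p$ yields the factorization $L(s,\chi_{d,D})=L(s,(\frac{d}{\cdot}))\,L(s,(\frac{D}{\cdot}))$. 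Reading off the coefficient of $n^{-s}$ on the right gives $\sum_{t\mid n}(\frac{d}{n/t})(\frac{D}{t})$, matching the $n$-th coefficient of $E_{d,D}$. I expect this factorization — really the reciprocity input identifying the genus character with the pair of Kronecker symbols, and in particular its behaviour at the ramified primes $p\mid\Delta$ — to be the main obstacle; the rest is bookkeeping.

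Finally I would match constant terms. The constant term of $E_{\chi_{d,D}}$ is $\frac1w\sum_{h\in H}\chi_{d,D}(h)$, since each $\Theta_h$ contributes $r(Q_h,0)=1$. When $d\neq1$ the character is nontrivial and orthogonality gives $0$, agreeing with $\frac12\delta_{d=1}L(0,D)=0$; when $d=1$ (so $D=\Delta$) it equals $|H|/w$, and Dirichlet's class number formula in the shape $L(0,\Delta)=2|H|/w$ — equivalent, via the functional equation, to the version recorded after \eqref{dirichlet} — identifies this with $\tfrac12 L(0,D)$. Assembling the two coefficient computations with the constant-term comparison proves $E_{\chi_{d,D}}=E_{d,D}$.
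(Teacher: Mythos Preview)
Your argument is correct and follows a genuinely different line from the paper's. The paper does not pass to ideals or to the Hecke $L$-function at all; instead it bootstraps from the $d=1$ case (Gauss's identity \eqref{dirichlet}). Using that the genus character satisfies $\chi_{d,D}(g)=\bigl(\tfrac{d}{n}\bigr)$ whenever $n$ is represented by some form in $g$, the paper replaces $\chi_{d,D}(g)r_g(n)$ by $\bigl(\tfrac{d}{n}\bigr)r_g(n)$, applies \eqref{dirichlet} to $\sum_g r_g(n)$ to get $\bigl(\tfrac{d}{n}\bigr)\sum_{t\mid n}\bigl(\tfrac{\Delta}{t}\bigr)$, and then a short manipulation with $\bigl(\tfrac{\Delta}{t}\bigr)=\bigl(\tfrac{d}{t}\bigr)\bigl(\tfrac{D}{t}\bigr)$ and $\bigl(\tfrac{d}{n}\bigr)\bigl(\tfrac{d}{t}\bigr)=\bigl(\tfrac{d}{n/t}\bigr)$ yields the divisor sum defining $E_{d,D}$. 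Your route through the Kronecker-type factorization $L(s,\chi_{d,D})=L\bigl(s,\bigl(\tfrac{d}{\cdot}\bigr)\bigr)\,L\bigl(s,\bigl(\tfrac{D}{\cdot}\bigr)\bigr)$ is the more conceptual classical argument: it situates the identity as an instance of a general principle about genus-character $L$-functions, and the prime-by-prime Euler check you outline makes the ramified places completely transparent. The paper's approach, by contrast, is more self-contained---only Gauss's formula and the elementary description of $\chi_{d,D}$ are needed---but the symbol manipulations at integers sharing a prime with $d$ require some care that your Euler-product bookkeeping handles automatically.
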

Using orthogonality of characters, one obtains formulas for the Eisenstein series associated to the individual genera, giving an explicit formula for Siegel's Theorem. 
\begin{thm}\label{thm:Siegel}
For $g\in G$ we have that 
\[
E_{g} = \frac{w}{|H|} \sum_{\chi_{d,D}\in G^*} \chi_{d,D}(g) E_{d,D},
\]
and hence $E_g$ is in the space of Eisenstein series in particular.
\end{thm}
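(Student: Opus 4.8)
The plan is to read the combination of \eqref{eqn:Echi} and Theorem \ref{thm:genustwist} as a linear system relating the genus Eisenstein series $\{E_g\}_{g\in G}$ to the divisor-sum Eisenstein series $\{E_{d,D}\}$, and then to invert this system by finite Fourier analysis on the abelian group $G=H/H^2$. Substituting $E_{\chi_{d,D}}=E_{d,D}$ from Theorem \ref{thm:genustwist} into \eqref{eqn:Echi} gives, for each admissible pair $(d,D)$,
\[
E_{d,D}=\frac{|H^2|}{w}\sum_{g\in G}\chi_{d,D}(g)\,E_{g},
\]
which exhibits each $E_{d,D}$ as the normalized finite Fourier transform of the tuple $(E_g)_{g\in G}$ evaluated at the character $\chi_{d,D}$. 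The goal is therefore to apply Fourier inversion and solve for a single $E_{g'}$.

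The key structural input is that the genus characters $\chi_{d,D}$, as $(d,D)$ ranges over all pairs of discriminants with $d>0$, $D<0$, and $dD=\Delta$, exhaust the full dual group $G^*$; this is exactly the one-to-one correspondence recorded above (see \cite[Section I.2]{GKZ}). Moreover, since $G=H/H^2$ is an elementary abelian $2$-group, every such character is real-valued with values in $\{\pm 1\}$, so the orthogonality relations take the clean form
\[
\sum_{\chi_{d,D}\in G^*}\chi_{d,D}(g)\,\chi_{d,D}(g')=|G|\,\delta_{g=g'}.
\]
I would then multiply the displayed expression for $E_{d,D}$ by $\chi_{d,D}(g')$ and sum over all genus characters; interchanging the two summations and invoking orthogonality collapses the inner sum to the single index $g=g'$, yielding
\[
\sum_{\chi_{d,D}\in G^*}\chi_{d,D}(g')\,E_{d,D}=\frac{|H^2|}{w}\,|G|\,E_{g'}.
\]

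To finish, I would simplify the constant using $|G|=|H/H^2|=|H|/|H^2|$, so that $|H^2|\cdot|G|=|H|$, and then solve for $E_{g'}$ to obtain the claimed identity. The concluding assertion that $E_g$ lies in the Eisenstein subspace is then immediate, since each $E_{d,D}$ is by construction the weight one Eisenstein series \eqref{eqn:Edef}, so the formula displays $E_g$ as an explicit $\C$-linear combination of Eisenstein series. Given Theorem \ref{thm:genustwist}, I expect no serious obstacle; the only point deserving care is confirming that the sign normalization $d>0$, $D<0$ parametrizes each genus character exactly once (rather than via unordered factorizations $\Delta=d_1d_2$, which would double-count), so that the summation over $G^*$ is indexed neither redundantly nor incompletely.
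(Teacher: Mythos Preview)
Your proposal is correct and takes essentially the same approach as the paper: both invert the linear relation \eqref{eqn:Echi} (combined with Theorem \ref{thm:genustwist}) via character orthogonality on the finite abelian $2$-group $G$, and then simplify the constant using $|G|\cdot|H^2|=|H|$. The only cosmetic difference is that the paper begins from $E_{g_0}$ and expands the indicator $\delta_{g=g_0}$ through orthogonality before plugging in \eqref{eqn:Echi} and Theorem \ref{thm:genustwist}, whereas you start from the expression for $E_{d,D}$ and apply Fourier inversion directly; the paper also records the bijection $G^*\leftrightarrow\{(d,D):d>0,\ D<0,\ dD=\Delta\}$ as a separate lemma rather than citing \cite{GKZ}, which addresses exactly the ``point deserving care'' you flagged.
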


\begin{rem}
\noindent

\noindent
\begin{enumerate}[leftmargin=*]
\item
Since the $n$th Fourier coefficients of $E_{d,D}$ are explicitly given as sums over the divisors of $n$, Theorem \ref{thm:Siegel} yields an explicit formula for the coefficients of $E_g$ after computing $\chi_{d,D}(g)$. However, $\chi_{d,D}(g)$ is explicitly given for example in Gross--Kohnen--Zagier \cite[Section I.2]{GKZ} in terms of integers represented by $g$. 
\item
The assumption that $\Delta$ is fundamental does not appear to be easily overcome with our techniques due to a missing connection between the quadratic forms and ideals, although Siegel's theorem holds in this case and hence we know that sums like \eqref{gauss} and more generally $r_{\chi}(n)$ for a genus character $\chi$ would indeed yield coefficients of Eisenstein series. One would need a generalized version of \eqref{gauss} in order to generalize in this direction.

\item
Fei Xu has pointed out that the result in this paper may be a special case of formulas from \cite{Fei} because the orthogonal group is a torus in this special case, which may make the formulas simple to compute and write in this form. However, some work seems to be involved in translating between the language of orthogonal groups presented there and the language of class groups presented here. It may be interesting to compute the representation masses using the results from \cite{Fei} in this special case and compare the answer with Theorem \ref{thm:Siegel} in order to see if one obtains non-trivial formulas by computing the coefficients in these two different ways.
\end{enumerate}
\end{rem}
For completeness, we present a "modular forms" proof of \eqref{gauss} in Section \ref{hecke}. In fact, it is not difficult to prove \eqref{gauss} in a straightforward way (see e.g. \cite[Chapter 3, Section 8, Exercise 18]{BS}, or Theorem 5.2 at \url{http://math.ou.edu/~jcook/LaTeX/massformula.pdf}, \cite[Proposition 2.1]{Williams}, \cite{KW}). However, we believe that our approach is interesting enough to justify presenting it here; the proof is also given here so that the arguments are self-contained.

The paper is organized as follows. In Section \ref{hecke}, we present a proof of  \eqref{gauss} by applying the Hecke operators to the corresponding generating function and investigating split, ramified, and inert primes in the ring of integers for $K=\Q(\sqrt{\Delta})$. In Section \ref{blackboard}, we study twisted sums of representations, proving Theorems \ref{thm:genustwist} and \ref{thm:Siegel}.

\section*{Acknowledgements}
The authors thank Tomoyoshi Ibukiyama, Lynn Walling, and Fei Xu for helpful comments following a presentation of the ideas from this paper at the Conference on the Arithmetic Theory of Quadratic Forms held in Seoul National University, Seoul, Korea in January, 2019. 

%%%%%%%%%%%%%%%%%%%%%%%%%%%%%%%%%%%%%%%%%%%%%%%%%%
%%%%%%%%%%%%%%%%%%%%%%%%%%%%%%%%%%%%%%%%%%%%%%%%%%
%%%%%%%%%%%%%%%%%%%%%%%%%%%%%%%%%%%%%%%%%%%%%%%%%%
%%%%%%%%%%%%%%%%%%%%%%%%%%%%%%%%%%%%%%%%%%%%%%%%%%
%%%%%%%%%%%%%%%%%%%%%%%%%%%%%%%%%%%%%%%%%%%%%%%%%%
%%%%%%%%%%%%%%%%%%%%%%%%%%%%%%%%%%%%%%%%%%%%%%%%%%
%%%%%%%%%%%%%%%%%%%%%%%%%%%%%%%%%%%%%%%%%%%%%%%%%%
%%%%%%%%%%%%%%%%%%%%%%%%%%%%%%%%%%%%%%%%%%%%%%%%%%
%%%%%%%%%%%%%%%%%%%%%%%%%%%%%%%%%%%%%%%%%%%%%%%%%%

\section{Hecke operators and formula \eqref{gauss} } \label{hecke}
In this section, we present a well-known proof of \eqref{gauss}, using the fact that $\sum_{h\in H}r(Q_h,n)$ may be interpreted as the number of ideals of norm $n$. To some extent, we follow the lines from \cite[p. 41]{Zagier} 
or \cite[Korollar to Satz 1, p. 100]{Zagier_z} (from which \eqref{gauss} is concluded in \cite[(11)]{Zagier_z}).
 We let
\[
\mathcal{L}(s):=\sum_{n\geq 1} \left(\sum_{h \in H} r(Q_h,n) \right) n^{-s},
\]
and we want to show that $\mathcal{L}(s)$ has a certain type of Euler product. 

Equivalently (see e.g. \cite[Section 4.2]{Zagier}), 
the modular form
\[
\sum_{h \in H} \Theta_h = \sum_{n\geq 1} \left(\sum_{h \in H} r(Q_h,n) \right) q^n \ \in M_1\left(\Gamma_0(-\Delta), \left(\frac{\Delta}{\cdot}\right) \right)
\]
is a Hecke eigenform whose eigenvalue for a prime $p$ is $\left(1+\left(\frac{\Delta}{p}\right)\right)$.
\rm
The following Proposition implies \eqref{gauss} immediately.
%%%%%%%%%%%%%%%%%%%%%%%%%%%%%%%%%%%%%%%%%%%%%%%%%%
\begin{prop} \label{hecke_action}
Let 
\[
a(n):=\sum_{h \in H} r(Q_h,n)
\]
and, for a prime $p$,
\[
b(n):=a(pn) + \left(\frac{\Delta}{p}\right) a(n/p)
\]
with the usual convention $a(n/p)=0$ if $p \nmid n$.
We have that
\[
b(n)=\left(1+\left(\frac{\Delta}{p}\right)\right)a(n).
\]
\end{prop}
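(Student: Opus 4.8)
The plan is to pass from representation numbers to ideal counts and then exploit the multiplicativity coming from unique factorization of ideals in $\cO_K$. Writing $K=\Q(\sqrt{\Delta})$ and letting $R(n)$ denote the number of integral ideals of $\cO_K$ of norm $n$, I would first invoke the classical dictionary between $\SL_2(\Z)$-classes of binary quadratic forms of discriminant $\Delta$ and the ideal class group of $\cO_K$. Under this correspondence, a representation of $n$ by a form $Q_h$ is matched, up to multiplication by a unit of $\cO_K$, with an integral ideal of norm $n$ lying in the class attached to $h$; summing over all of $H$ and recording that $\cO_K^{\times}$ has exactly $w$ elements (which is also the number of automorphs $w_Q$), this yields
\[
a(n)=\sum_{h\in H} r(Q_h,n)=w\,R(n).
\]
Crucially this identity uses only the form--ideal dictionary and the unit count, not \eqref{gauss} itself, so no circularity is introduced. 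Since every term of the asserted identity is a value of $a$, the factor $w$ cancels and it suffices to prove
\[
R(pn)+\left(\tfrac{\Delta}{p}\right)R(n/p)=\left(1+\left(\tfrac{\Delta}{p}\right)\right)R(n),
\]
with the convention $R(n/p)=0$ when $p\nmid n$.

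Next I would reduce to prime powers. The function $R$ is multiplicative because the norm is multiplicative and every integral ideal factors uniquely into prime ideals; equivalently $\sum_{n\geq 1}R(n)n^{-s}=\zeta_K(s)=\zeta(s)L\!\left(s,\left(\tfrac{\Delta}{\cdot}\right)\right)$ carries an Euler product. Writing $n=p^km$ with $p\nmid m$, both sides of the displayed identity factor as $R(m)$ times a purely $p$-local expression, so it is enough to verify
\[
R(p^{k+1})+\left(\tfrac{\Delta}{p}\right)R(p^{k-1})=\left(1+\left(\tfrac{\Delta}{p}\right)\right)R(p^k)
\]
for every $k\geq 0$, where $R(p^{-1}):=0$ matches the stated convention.

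The final step is a case analysis driven by the splitting type of $p$ in $\cO_K$, which for a fundamental discriminant is governed precisely by the Kronecker symbol $\left(\tfrac{\Delta}{p}\right)$. If $\left(\tfrac{\Delta}{p}\right)=1$ then $(p)=\mathfrak{p}\overline{\mathfrak{p}}$ with $\mathfrak{p}\neq\overline{\mathfrak{p}}$, the ideals of norm $p^k$ are $\mathfrak{p}^{\,i}\overline{\mathfrak{p}}^{\,k-i}$ for $0\leq i\leq k$, and $R(p^k)=k+1$. If $\left(\tfrac{\Delta}{p}\right)=-1$ then $(p)$ is prime of norm $p^2$, so $R(p^k)=1$ for $k$ even and $0$ for $k$ odd. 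If $\left(\tfrac{\Delta}{p}\right)=0$, i.e. $p\mid\Delta$, then $(p)=\mathfrak{p}^2$ and $R(p^k)=1$ for all $k$. Substituting each of these into the three-term recurrence confirms it in every case (for instance in the split case both sides equal $2k+2$); alternatively, one can read the recurrence off at once by clearing denominators in the local Euler factor $\sum_k R(p^k)X^k=\bigl((1-X)(1-\left(\tfrac{\Delta}{p}\right)X)\bigr)^{-1}$.

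I expect the only genuine obstacle to lie in the first step: setting up the identity $a(n)=w\,R(n)$ cleanly for all representations (not merely primitive ones) while keeping careful track of the unit/automorph factor and, above all, ensuring the argument does not quietly assume \eqref{gauss}. Once that dictionary is in hand, the multiplicativity and the three-case verification are entirely routine finite checks.
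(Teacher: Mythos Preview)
Your argument is correct. The identity $a(n)=w\,R(n)$ follows from the form--ideal dictionary exactly as you describe (an element $m\in I_h$ with $N(m)/N(I_h)=n$ determines the integral ideal $(m)I_h^{-1}$ of norm $n$ in the class $h^{-1}$, and conversely up to the $w$ units), and your local recurrence for $R(p^k)$ is a routine check in each splitting type. No circularity with \eqref{gauss} is introduced.

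The paper takes a different route. Rather than summing over $H$ at the outset and invoking the multiplicativity of $R$, it computes the action of $T_p$ on each individual theta series $\Theta_h=\sum_{m\in I_h}q^{N(m)/N(I_h)}$. Using that $p\mid N(m)/N(I_h)$ iff $m\in\mathfrak{q}I_h$ for some prime $\mathfrak{q}\mid(p)$, it obtains the class-by-class formulas $\Theta_h\vert T_p=\Theta_{h\mathfrak{p}}+\Theta_{h\mathfrak{p}'}$ (split), $\Theta_h\vert T_p=\Theta_{h\mathfrak{p}}$ (ramified), and $\Theta_h\vert T_p=0$ (inert), and then sums over $H$ to get the eigenvalue statement. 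Your approach is more streamlined for the bare proposition, essentially reading the three-term recurrence off the local Euler factor of $\zeta_K$; the paper's approach yields the finer information that $T_p$ permutes the $\Theta_h$ in a prescribed way, which it records separately (e.g.\ $E_g\vert T_p=2E_{g\mathfrak{p}}$ for split $p$ and $E_g\vert T_p=E_{g\mathfrak{p}}$ for ramified $p$). Both arguments rest on the same underlying input, unique factorization in $\cO_K$, just packaged differently.
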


\begin{rem}
\noindent

\noindent
\begin{enumerate}[leftmargin=*]
\item
We do not actually need to know that $\sum_{h \in H} \Theta_h$ is a modular form in order to derive \eqref{gauss} from Proposition \ref{hecke_action}.
\item
By induction, we also conclude that 
\[
a(n)=a(1)\sum_{t\mid n} \left(\frac{\Delta}{t}\right). 
\]
Hence $a(n)$ is multiplicative up to normalization by the factor $a(1)$. 
One easily checks that $a(1)=w$, as the number of representations of $1$ is the same as the number of elements of norm $1$ (i.e., the number of units). 

Another interpretation of $w$ is as the number of automorphs of the binary quadratic forms of discriminant $\Delta$. 
Indeed, $w=w_Q$ coincides with the factor divided to weight each term in the average \eqref{eqn:SiegelWeil}, but $w_Q$ is not an invariant of the discriminant in dimension higher than two. 
\end{enumerate}
\end{rem}

It is convenient to use certain standard operators $U=U_p$ and $V=V_p$ and Hecke operators (we, of course, have in mind $M_1\left(\Gamma_0(-\Delta), \left(\frac{\Delta}{\cdot}\right) \right)$). For a formal power series 
\[
f=\sum_{n\geq 1} c(n) q^n
\]
 and a prime $p\in \Z$ we set
\begin{equation}\label{eqn:Tpdef}
f \vert U:= \sum_{n \geq 1} c(pn) q^n, \hspace{3mm} f \vert V:= \sum_{n \geq 1} c(n) q^{pn}, \hspace{3mm} \text{and} \hspace{3mm}
f \vert T_p := f \vert U +  \left(\frac{\Delta}{p}\right) f \vert V.
\end{equation}

Let $K=\Q(\sqrt{\Delta})$ be the imaginary quadratic extension. 

Proposition \ref{hecke_action} in this 
notation 
 states that, for every prime $p$,
\begin{equation}\label{eqn:reform2.1}
\left. \left( \sum_{h \in H} \Theta_h \right) \right\vert T_p = \left(1+\left(\frac{\Delta}{p}\right)\right) \sum_{h \in H} \Theta_h = 
\begin{cases}
2  \sum_{h \in H} \Theta_h & \text{if $p$ splits in $K$} \\
0  & \text{if $p$ is inert in $K$} \\
\sum_{h \in H} \Theta_h & \text{if $p$ is ramified in $K$}, \\
\end{cases}
\end{equation}
and this is the statement that what we are going to prove.

For the proof of Proposition \ref{hecke_action} we need some standard facts about the correspondence between ideals in $\cO_K$ and quadratic forms which we recall now. For every non-zero ideal $I \subset \cO_K$ there exists a basis $<\alpha,\beta>$ which generates $I$ as a rank two free $\Z$-module (a lattice on the complex plain). The norm of an ideal, $N(I)$ is, by definition, its index 
\[
N(I)=[\cO_K:I] = \vert \cO/I \vert.
\] 
If $I=(u)$ is principal, then $N(I)=N(u)=u\bar{u}$ where the bar denotes complex conjugation.

We now associate a quadratic form to an ideal $I$. We choose a basis  $<\alpha,\beta>$ of $I$ as a $\Z$ module, and
set
\[
Q_I=\frac{N(\alpha x + \beta y)}{N(I)} = ax^2+bxy+cy^2.
\]
By \cite[Theorem 2.7.4]{BS}, we have that $b^2-4ac=\Delta$, and $a,b,$ and $c$ are integers with gcd$(a,b,c)=1$. Moreover,
the map $I \mapsto Q_I$ establishes an isomorphism between the group of classes of ideals in $\cO_K$ (modulo principal ideals) and elements of $H=H(\Delta)$.

The above discussion allows us to write 
\[
\Theta_h = \sum_{m \in I_h} q^{N(m)/N(I_h)}
\]
where $I_h$ is any ideal in the class $h \in H$.
\begin{proof}[Proof of Proposition \ref{hecke_action}]

Note first that, by the uniqueness of decomposition of ideals into products of prime ideals in $\cO_k$,  for $m\in I_h$ we have 
\begin{equation} \label{eqn:common}
p\mid \frac{N(m)}{N(I_h)} \hspace{3mm} \text{if and only if} \hspace{3mm} m\in \mathfrak{q} I_h \hspace{3mm} \text{for some prime ideal $\mathfrak{q}\mid (p)$}. 
\end{equation}
Here  
\[
\mathfrak{q} = \begin{cases}
\text{$\mathfrak{p}$ or $\mathfrak{p}'$} & \text{if $\left(\frac{\Delta}{p}\right)=1$ and $(p)=\mathfrak{p}\mathfrak{p}'$}, \\
(p) & \text{if }\left(\frac{\Delta}{p}\right)=-1, \\
\mathfrak{p} & \text{if $\left(\frac{\Delta}{p}\right)=0$ and $(p)=\mathfrak{p}^2$}.
\end{cases}
\]
We see from \eqref{eqn:common} that the image of $\Theta_h$ under the operator $U_p$ may be naturally written as a sum over prime ideals dividing $(p)$. Although one may unify the argument in this way, we split it into the three cases of $p$ being split, ramified, or inert for clarity of presentation.

We begin with the case that $p$ is split. In this case, in order to verify \eqref{eqn:reform2.1} it suffices to check that 
\begin{equation} \label{hecke_split}
\Theta_h \vert T_p = \Theta_{h\p} + \Theta_{h\p'}.
\end{equation}
Indeed, \eqref{hecke_split} will imply that 
\[
\sum_{h \in H} \Theta_h \vert T_p = \sum_{h \in H\p} \Theta_h + \sum_{h \in H\p'} \Theta_h = 2 \sum_{h \in H}\Theta_h
\]
because $H=H\p=H\p'$ as a set.

\begin{rem} \label{rem_spl}
Note that in fact $h\p$ and $h\p'$ belong to the same genus because in the genera group $g=g^{-1}$ for every $g \in G$,
and $h\p  h\p' = h^2 (p) = id_G$. We therefore actually may derive from \eqref{hecke_split} a finer identity for splitting prime $p$:
\[
E_g \vert T_p = 2E_{g\p}=2E_{g\p'}.
\]
\end{rem}
In order to prove \eqref{hecke_split}, we note that by \eqref{eqn:common}, $N(m)/N(I_h)$ is divisible by $p$ if and only if $\mathfrak{p}\mid m$ or $\mathfrak{p}'\mid m$. However, if the intersection $(p)=\mathfrak{p}\mathfrak{p}'$ divides $m$, then we have double-counted the contribution from this ideal, and inclusion-exclusion yields
\rm
\[
\Theta_h | U_p = \left( \sum_{m \in I_h \p} + \sum_{m \in I_h \p'} \right) q^{N(m)/pN(I_h)} - \sum_{m \in I_h \p \cap  I_h \p'} q ^ {N(m)/pN(I_h)}.
\]
Recalling the definition of $T_p$ from \eqref{eqn:Tpdef}, in order to finish our proof of \eqref{hecke_split} it now suffices to note that the first sum of sums in the right is $ \Theta_{h\p} + \Theta_{h\p'}$ while the subtracted sum equals $\Theta_h \vert V_p$ because $I_h \p \cap  I_h \p' = (p)I_h$ (by the unique decomposition again) while $N(pm)=p^2N(m)$.

We next consider the case that $p=\mathfrak{p}^2$ is ramified. In this case $T_p=U_p$ and \eqref{eqn:reform2.1} follows from 
\begin{equation} \label{hecke_ramified}
\Theta_h \vert U_p = \Theta_{h\p}.
\end{equation}
Indeed, exactly as in the splitting case, \eqref{hecke_ramified} will imply that 
\[
\sum_{h \in H} \Theta_h \vert T_p = \sum_{h \in H\p} \Theta_h = \sum_{h \in H}\Theta_h
\]
because $H=H\p$ as a set. 

\begin{rem} \label{rem_ram}
Note that, similarly to Remark \ref{rem_spl} above, for ramified primes, $T_p$ permutes the series $E_g$:
\[
E_g \vert T_p = E_{g\p}
\]
for every genus $g \in G$.
\end{rem}

In order to verify \eqref{hecke_ramified}, observe that by \eqref{eqn:common} and the fact that $p=N(\mathfrak{p})$, we have 
\[
\Theta_h | U_p =  \sum_{m \in I_h \p} q^{N(m)/pN(I_h)}  =  \sum_{m \in I_h \p} q^{N(m)/N(I_h \p)} = \Theta_{h\p}.
\]

We finally consider the case that $p$ is inert. In this case \eqref{eqn:reform2.1} is equivalent to 
\begin{equation} \label{hecke_inert}
\Theta_h \vert T_p =0
\end{equation}
for every $h \in H$. By \eqref{eqn:common} and the fact that $I_h$ and $I_h(p)$ belong to the same class in $H$ and $N((p))=p^2$, we obtain
\rm
\[
\Theta_h | U_p =  \sum_{m \in I_h (p)} q^{N(m)/pN(I_h)} =  \sum_{m \in I_h (p)} q^{pN(m)/N(I_h(p))} = \Theta_{I_h(p)} \vert V = \Theta_{I_h} \vert V_p.
\]
Since $ \left(\frac{\Delta}{p}\right)=-1$, the claim now follows from the definition of $T_p$ \eqref{eqn:Tpdef}.

\end{proof}

%%%%%%%%%%%%%%%%%%%%%%%%%%%%%%%%%%%%%%%%
%%%%%%%%%%%%%%%%%%%%%%%%%%%%%%%%%%%%%%%%
%%%%%%%%%%%%%%%%%%%%%%%%%%%%%%%%%%%%%%%%
%%%%%%%%%%%%%%%%%%%%%%%%%%%%%%%%%%%%%%%%
%%%%%%%%%%%%%%%%%%%%%%%%%%%%%%%%%%%%%%%%
%%%%%%%%%%%%%%%%%%%%%%%%%%%%%%%%%%%%%%%%
%%%%%%%%%%%%%%%%%%%%%%%%%%%%%%%%%%%%%%%%
%%%%%%%%%%%%%%%%%%%%%%%%%%%%%%%%%%%%%%%%
%%%%%%%%%%%%%%%%%%%%%%%%%%%%%%%%%%%%%%%%
%%%%%%%%%%%%%%%%%%%%%%%%%%%%%%%%%%%%%%%%
%%%%%%%%%%%%%%%%%%%%%%%%%%%%%%%%%%%%%%%%
%%%%%%%%%%%%%%%%%%%%%%%%%%%%%%%%%%%%%%%%
%%%%%%%%%%%%%%%%%%%%%%%%%%%%%%%%%%%%%%%%
%%%%%%%%%%%%%%%%%%%%%%%%%%%%%%%%%%%%%%%%
\section{Twists with genus character} \label{blackboard}

In this section, we generalize \eqref{dirichlet} to prove Theorem \ref{thm:genustwist}  and obtain Theorem \ref{thm:Siegel} as a corollary. Recall that (see \cite[Section I.2]{GKZ})) the value of the genus character $\chi_{d,D}(g)$ on $g \in G$ is defined by 
\[
\chi_{d,D}(g):=\left(\frac{d}{r}\right),
\]
where $r$ is any integer represented by at least one $Q\in g$ and $(d,r)=1$ unless $Q=[a,b,c]$ satisfies $(a,b,c,d)>1$ (in which case no such $r$ exists and we choose $r=0$). We also recall the definition \eqref{eqn:wdef} of $w=w_{\Delta}$ and  the definition \eqref{eqn:Edef} of the Eisenstein series $E_{d,D}$. 
\begin{proof}[Proof of Theorem \ref{thm:genustwist}]
The $d=1$ case of Theorem \ref{thm:genustwist} is precisely \eqref{dirichlet} because $\chi_{1,\Delta}(g)=1$ and 
%\[
%\sum_{g\in G} E_g=
%{\bf\boldsymbol{
%\frac{1}{|H^2|}
%}\rm}
%\sum_{g\in H/H^2}\sum_{h\in g} \Theta_{h} = 
%{\bf\boldsymbol{
%\frac{1}{|H^2|}
%}\rm}
%\sum_{h\in H} \Theta_h,
%\]
\[ 
E_{\chi_{1,\Delta}}= \frac{\vert H^2 \vert}{w} \sum_{g\in G} E_g = \frac{1}{w} \sum_{g\in H/H^2}\sum_{h\in g} \Theta_{h} =  \frac{1}{w} \sum_{h\in H} \Theta_h,
\]
while 
\[
E_{1,\Delta}(\tau)=\frac{1}{2}L(0,\Delta)+\sum_{n\geq 1} \sum_{t\mid n} \left(\frac{\Delta}{t}\right) q^n. 
\]
Writing $r_g(n):=\sum_{h\in g} r(Q_h,n)$, we have 
\[
E_g(\tau)=\frac{1}{|H^2|}\sum_{n\geq 0} r_g(n) q^n. 
\]
Since $\chi_{d,D}(g)= \left(\frac{d}{n}\right)$ for any $n$ represented by $Q$ for $Q\in g$, we have $\chi_{d,D}(g)r_g(n) = \left(\frac{d}{n}\right) r_g(n)$ for every $n$ and hence 
\[
|H^2|\sum_{g\in G} \chi_{d,D}(g) E_g(\tau) = \sum_{n\geq 0}\sum_{g\in G}\chi_{d,D}(g) r_g(n)q^n= \sum_{n\geq 0}\left(\frac{d}{n}\right)\sum_{g\in G} r_g(n)q^n.
\]
For $n=0$ we have $\left(\frac{d}{n}\right)=0$ unless $d=1$. Moreover, from the $d=1$ case we know that for $n\geq 1$
\[
\sum_{g\in G} r_g(n)=w\sum_{t\mid n}\left(\frac{\Delta}{t}\right).
\]
Therefore, for $d>1$ we have 
\begin{align*}
E_{\chi_{d,D}}(\tau)&=\frac{|H^2|}{w}\sum_{g\in G} \chi_{d,D}(g) E_g(\tau) =\sum_{n\geq 1}\left(\frac{d}{n}\right)\sum_{t\mid n}\left(\frac{\Delta}{t}\right) q^n\\
&= \sum_{n\geq 1}\left(\frac{d}{n}\right)\sum_{t\mid n}\left(\frac{d}{t}\right)\left(\frac{D}{t}\right) q^n= \sum_{n\geq 1}\sum_{t\mid n}\left(\frac{d}{n/t}\right)\left(\frac{D}{t}\right) q^n=E_{d,D}(\tau),
\end{align*}
yielding the claim.
\end{proof}

We finally conclude the special case of Siegel's mass formula. Let $G^*$ denote the dual group for $G$. Since $G=H/H^2$ is a $2$-group, every genus character $\chi \in G^*$ is real and, as noted before Theorem \ref{thm:Siegel} (see \cite[Section I.2]{GKZ} for further details), $G^*$ precisely coincides with the set of characters $\chi_{d,D}$. 
While this is well-known, we supply a short proof for the convenience of the reader.
\begin{lemma}\label{lem:G*}
For a fundamental discriminant $\Delta<0$ and $G=H(\Delta)/H(\Delta)^2$, we have 
\[
G^*=\left\{\chi_{d,D}: dD=\Delta,\text{ and }d>0\right\}.
\]
\end{lemma}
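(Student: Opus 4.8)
The plan is to exploit the fact that $G=H/H^2$ is an elementary abelian $2$-group: every class squares to the identity, so each $\chi\in G^*$ is $\{\pm1\}$-valued (in particular real) and $|G^*|=|G|$ equals the number of genera. Since the lemma asserts an equality of two subsets of $G^*$, I will exhibit the displayed set as a subset of $G^*$ of the correct cardinality, which forces equality; the work thus splits into a counting step and an injectivity (distinctness) step. First I would record that each $\chi_{d,D}$ is a genus character, hence lies in $G^*$ (see \cite[Section I.2]{GKZ}), so the displayed set is contained in $G^*$.

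For the counting step, I would factor the fundamental discriminant uniquely into prime discriminants $\Delta=q_1\cdots q_t$ (the local components $-4,\pm8$, and $\left(\tfrac{-1}{p}\right)p$ for odd $p\mid\Delta$). A factorization $\Delta=dD$ into discriminants corresponds exactly to a subset $S\subseteq\{1,\dots,t\}$ with $d=\prod_{i\in S}q_i$ and $D=\prod_{i\notin S}q_i$, and since $dD=\Delta<0$ exactly one of $d,D$ is positive; hence the condition $d>0$ selects precisely $2^{t-1}$ of the $2^{t}$ subsets. By Gauss's genus theory the number of genera is also $2^{t-1}$, so $|G^*|=2^{t-1}$ matches the number of admissible pairs $(d,D)$. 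By multiplicativity of the Kronecker symbol in its upper entry, $\chi_{d,D}(g)=\left(\tfrac{d}{r}\right)=\prod_{i\in S}\left(\tfrac{q_i}{r}\right)$, so writing $\psi_i$ for the single prime-discriminant character $g\mapsto\left(\tfrac{q_i}{r}\right)$ we have $\chi_{d,D}=\prod_{i\in S}\psi_i$.

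It then remains to prove injectivity of $(d,D)\mapsto\chi_{d,D}$ on admissible pairs, which I regard as the crux. If $\chi_{d,D}=\chi_{d',D'}$ with associated subsets $S,S'$, then $\prod_{i\in S\triangle S'}\psi_i$ is trivial; the key input is that the only subsets $T$ for which $\prod_{i\in T}\psi_i$ is trivial are $T=\varnothing$ and $T=\{1,\dots,t\}$. Granting this, $S\triangle S'\in\{\varnothing,\{1,\dots,t\}\}$, and the second case would force $d'=\Delta/d<0$, contradicting $d'>0$; hence $S=S'$. Combined with the cardinality count this yields $\{\chi_{d,D}\}=G^*$.

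The main obstacle is precisely the nontriviality of every proper subproduct $\prod_{i\in T}\psi_i$. I see two ways to secure it. The route most self-contained with the paper is to find, for each $\varnothing\ne T\subsetneq\{1,\dots,t\}$, a rational prime $p$ that splits in $K$ (so that $p$ is represented by some form of discriminant $\Delta$, via the ideal correspondence of Section \ref{hecke}) with $\prod_{i\in T}\left(\tfrac{q_i}{p}\right)=-1$: the constraints $\prod_{i}\left(\tfrac{q_i}{p}\right)=1$ and $\prod_{i\in T}\left(\tfrac{q_i}{p}\right)=-1$ are independent and compatible, so Dirichlet's theorem on primes in arithmetic progressions (with the coprimality of the $q_i$ and the Chinese remainder theorem) produces such a $p$, whence $\chi_{d,D}$ is nontrivial for that $T$. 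Alternatively one may invoke Theorem \ref{thm:genustwist}: if $\chi_{d,D}=\chi_{d',D'}$ then $E_{\chi_{d,D}}=E_{\chi_{d',D'}}$, so $E_{d,D}=E_{d',D'}$, and linear independence of the Eisenstein series attached to distinct pairs of primitive characters (cf. \cite[Sections 4.5--4.8]{DiamondShurman}) forces $(d,D)=(d',D')$, once the weight-one symmetry $E_{d,D}=E_{D,d}$ is ruled out by the sign condition $d>0$.
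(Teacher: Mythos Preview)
Your argument is correct and follows the paper's route: factor $\Delta$ into its prime discriminants, count the $2^{t-1}$ positive discriminant divisors $d$, and match this against Gauss's $|G|=2^{t-1}$. Your treatment is in fact more complete than the paper's, which simply asserts that making $2^{t-1}$ distinct choices of positive $d$ yields $2^{t-1}$ \emph{pairwise distinct} characters $\chi_{d,D}$ (tacitly deferring to \cite[Section~I.2]{GKZ}); the injectivity step you flag as the crux---and both of the verifications you propose, neither of which is circular since Theorem~\ref{thm:genustwist} is proved earlier without invoking the lemma---fills that gap.
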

\begin{proof}
One begins with a famous theorem of Gauss which states that $|G|=2^{t-1}$ (see \cite[Theorem 3.8.5]{BS}), where $t$ is the number of pairwise distinct prime divisors of $\Delta$. Since $|G^*|=|G|$, it hence suffices to construct $2^{t-1}$ pairwise distinct genus characters $\chi_{d,D}$, which is equivalent to making $2^{t-1}$ distinct choices of positive discriminants $d\mid \Delta$. As usual, we decompose 
\[
\Delta=(-1)^{a_3+1}2^{\alpha}\prod_{\substack{p\mid \Delta\\ p\equiv 1\pmod{4}}} p  \prod_{\substack{p\mid \Delta\\ p\equiv 3\pmod{4}}} (-p),
\]
where $(-1)^{a_3+1}2^{\alpha}\in \{1,-4,8,-8\}$ and $a_j$ is the number of $p\equiv j\pmod{4}$ such that $p\mid \Delta$. Then for a vector $\bm{\delta}$ indexed by the primes $p$ dividing $2\Delta$ with $\delta_p\in \{0,1\}$, we set 
\[
d_{\bm{\delta}}:=\left((-1)^{a_3+1}2^{\alpha}\right)^{\delta_{2}}\prod_{\substack{p\mid \Delta\\ p\equiv 1\pmod{4}}} p^{\delta_p}  \prod_{\substack{p\mid \Delta\\ p\equiv 3\pmod{4}}} (-p)^{\delta_p}. 
\]
If $2\nmid \Delta$, then we set $\delta_2=1$. There are hence $2^t$ such choices of $d_{\bm{\delta}}$, but we have the additional restriction that $d_{\delta}>0$. Note that if every odd $p\mid \Delta$ satisfies $p\equiv 1\pmod{4}$, then since $\Delta<0$ we must have $(-1)^{a_3+1}2^{\alpha}\in \{-4,-8\}$ and the condition $d>0$ is equivalent to $\delta_2=0$ in this case. On the other hand, if there is an odd prime $p_0\equiv 3\pmod{4}$ dividing $\Delta$, then there is a bijection between those $d_{\bm{\delta}}>0$ and $d_{\bm{\delta}'}<0$ formed by changing $\delta_{p_0}\mapsto 1-\delta_{p_0}$. In both cases, one of the $\delta_p$ (either $\delta_2$ or $\delta_{p_0}$ is determined by the other components of $\bm{\delta}$ and the condition $d_{\bm{\delta}}>0$. We see that there are hence precisely $2^{t-1}$ such choices of $d_{\bm{\delta}}$ . 
\end{proof}

We are now ready to conclude the main theorem.

\begin{proof}[Proof of Theorem \ref{thm:Siegel}]

Since $G=H/H^2$, thinking of $g\in G$ as a representative in $H$ we have $g=g_0$ if and only if $g\in g_0 H^2$, and hence
\[
E_{g_0} =  \sum_{g \in G} E_{g}\delta_{g \in g_0 H^2}.
\]
Note next that since $g_0^{-1}=g_0$ in $G$, we have that $g\in g_0 H^2$ if and only if 
\[
g_0 g =g_0^{-1}g \in H^2.
\]
Thus $\delta_{g\in g_0 H^2}=\delta_{g_0g\in H^2}$ and we may use the orthogonality relation 
\[
\frac{1}{|G|}\sum_{\chi\in G^*}\chi(g)=\delta_{g\in H^2}
\]
and the total multiplicativity of the characters to rewrite 
\[
E_{g_0} =   \frac{1}{|G|} \sum_{g \in G} E_{g} \sum_{\chi\in G^*} \chi(g_0)\chi(g).
\]
By Lemma \ref{lem:G*}, we have $G^*=\{\chi_{d,D}: d>0,D<0, dD=\Delta\}$, so the above may be rewritten as 
\[
E_{g_0}= \frac{1}{|G|} \sum_{g \in G} E_{g} \sum_{\chi_{d,D}\in G^*} \chi_{d,D}(g_0) \chi_{d,D}(g).
\]
We then interchange the sums and plug in \eqref{eqn:Echi} to obtain
\[
E_{g_0}= \frac{1}{|G|} \sum_{\chi_{d,D}\in G^*} \chi_{d,D}(g_0) \sum_{g \in G} \chi_{d,D}(g)  E_{g}= \frac{1}{|G|} \sum_{\chi_{d,D}\in G^*} \chi_{d,D}(g_0) \frac{w}{|H^2|}E_{\chi_{d,D}}.
\]
We may then plug in Theorem \ref{thm:genustwist} to obtain 
\[
E_{g_0}=\frac{w}{|G|\cdot |H^2|}  \sum_{\chi_{d,D}\in G^*}\chi_{d,D}(g_0) E_{d,D}.
\]
Finally, we use 
\[
|H|=|H/H^2|\cdot |H^2|=|G|\cdot |H^2|
\]
to conclude that 
\[
E_{g_0}=\frac{w}{|H|}  \sum_{\chi_{d,D}\in G^*}\chi_{d,D}(g_0) E_{d,D}.
\]

\end{proof}

\end{document}